% -------------- Page Setup --------------
	
\documentclass[11pt, a4paper]{article}
	
\setlength{\topmargin}{1cm}
\setlength{\leftmargin}{2cm}
\setlength{\rightmargin}{0cm}
 	
\usepackage{amsmath, amsthm, amscd, amsfonts, amssymb, graphicx, color}
\usepackage[bookmarksnumbered, plainpages]{hyperref}
%	\usepackage{mathrsfs}
%\numberwithin{equation}{section}

\pagestyle{myheadings}
\thispagestyle{empty}

 % ----------------------------------------
 	
 \newtheorem{thm}{Theorem}

%\numberwithin{equation}{section}
%
  %
 % ----------------------------------------
 %	
 \begin{document}
\centerline{\Large{\bf The abstract Cauchy problem for the non-}}
\centerline{}
\centerline{\Large{\bf stationary bulk queue $M(t) \vert M[k, B] \vert 1$}}
 \centerline{}
\centerline{Yong-Chol Chon}
 \centerline{}
 \small \centerline{Faculty of Mathematics, \textbf{Kim Il Sung} University, D.P.R Korea}
\small \centerline{e-mail address: chonyongchol@yahoo.com}
\centerline{}
\centerline{}
\begin{abstract}
We derived state probability equations describing the queue $M(t) \vert$ $M[k, B] \vert 1$  and formulated as an abstract Cauchy problem to investigate 
by means of the semi-group theory of bounded linear operators in functional analysis. 
With regard to the abstract Cauchy problem of this queue, we determined the eigenfunctions of the maximal operator and showed some properties 
of the Dirichlet operator.
\end{abstract}
{\bf Keywords:} non-stationary bulk queue, abstract Cauchy problem \\
{\bf MSC(2010):} 60K20, 60K25, 68M20
%
%
%%%%%%%%%%%%%%  1 Introduction   %%%%%%%%%%%%%%%%
%
%
\section{Introduction}
We study about a non-stationary bulk queue arising in queuing theory. 
It is important to consider non-stationary property of queues in realistic queuing systems. 
In general, in realistic queuing systems such as welfare service system, first-aid system, repair shop and communication system, 
arrival rate of the customers for service varies with time by some factors. 
The stream of customers entering in service station such as stores, restaurants and barbershops varies according to time with 
period of a day or a week and the stream of ships entering in a port varies according to time with period of a year. 
In queueing theory, however, the stationary property of the streams by approximation to maximum or average streams have been 
assumed and the variability and periodicity of the streams have been disregarded in many cases. 
If a changeable stream of customers is approximated by maximum then idle time of servers increases. 
If it is approximated by average then it is unsuitable to correspond service rate with stream of customers. 
In the end the utility factor of systems decreases and the efficiency of service activity gets diminished.

For these reasons, the necessity to consider non-stationary queues has already been noticed and studied in many literatures 
concerned with queueing theory such as \cite{tho}. 
Since the stationary property of system parameters is destroyed in non-stationary queues, it is very difficult to study the specific properties of system. 
For the non-stationary queues in which analytical study is difficult, many approximate computation methods were proposed. 
In \cite{fel}, it was considered staffing of time-varying queues to achieve approximate time-stable performance. 
In \cite{fli}, it was obtained the queue length distribution of a multiple-server queuing system with time varying arrival and service 
rates when these rates were high.

In queueing theory, it is usually assumed that customers are arrived by one at a time. 
However, the queues with batch arrivals and batch services are more general and have several applications in telecommunications, 
manufacturing and computer systems. 
In modern wireless communication systems, especially when dealing with multimedia type of data, requests arrive in batches of 
varying sizes and services are provided in varying batches. 

In \cite{che} steady-state of a Markovian bulk-arrival and bulk-service queues was studied. 
In \cite{gup, haj} asymptotic stability of the solution of the $M(t) \vert$ $M[k, B] \vert 1$ queuing model with application of semi-group 
theory of operators was considered. 
In this paper, we consider the state probability equations of the non-stationary bulk queue $M(t) \vert$ $M[k, B] \vert 1$ and the abstract 
Cauchy problem to study behavior of state probability. 
In the queue $M(t) \vert$ $M[k, B] \vert 1$, services begin as soon as there are at least $k, k \geq 1$ customers in the queue.

%
%
%%%  2 Structure of Non-stationary arrival bulk queue $M(t) \vert$ $M[k, B] \vert 1$  %%%%%%%
%
%

\section{Structure of Non-stationary arrival bulk queue $M(t) \vert$ $M[k, B] \vert 1$}

We consider the queue with following structure. 
Customers are arrived according to non-stationary Poisson process of intensity $\lambda(t)$. 
The server starts service as soon as there are at least $k (k=1, 2, \cdots)$ customers in the queue. 
If a customer arrives while the server is busy, then the customer joins the queue. 
The server can at most serve $B$ customers simultaneously. 
The service time is exponentially distributed with parameter $\mu$. The system has only one server. 

We denote the queue with this structure by $M(t) \vert$ $M[k, B] \vert 1$. 
We need two time parameters to describe the non-stationary bulk queue $M(t) \vert$ $M[k, B] \vert 1$. 
The parameter $t \in [0, \infty)$ counts the time of the evolution of the whole system, whereas $x \in[0, \infty)$ counts the elapsed service time. 
The service time $x$ is reset to $0$ whenever a new service starts. 
Let $(n_1, n_2)$ denotes the states of the queue $M(t) \vert$ $M[k, B] \vert 1$ where $n_1$ represents number of customers in queue, 
$n_2=1$ represents that the server is busy and $n_2=0$ represents that the server is idle. 
We assume that $0 \leq r <k, ~ n \in \{0, 1, 2, \cdots \}, ~ t, x \geq 0$.

Let $p_{r,0}(t)$ denotes the probability that at time $t$ there are $r$ customers in queue to wait for service and the server is idle 
and $p_{r,1}(x, t)$ represents the density function of elapsed service time on condition that at time $t$ there are $n$ customers in queue. 
Then $\int_{(0, \infty)} p_{n, 1}(x, t)dx$ represents the probability that at time $t$ there are $n$ customers in queue to wait for service 
and the server is busy. For all $t \geq 0$ it follows that
\begin{equation*}
\sum_{r=0}^{k-1}p_{r,0}(t) + \sum_{n=0}^{\infty} p_{n, 1}(x, t)dx = 1.
\end{equation*}

%
%%%%%% 3 State probability equations     %%%%%%
%

\section{State probability equations}
\begin{thm}
For the queue $M(t) \vert$ $M[k, B] \vert 1$, following equations hold.
\begin{eqnarray}
&& \frac{dp_{0,0}(t)}{dt} = -\lambda(t) p_{0,0}(t) + \mu \int_{(0, \infty)}p_{0,1}(x, t)dx  \label{eq1} \\
&& \frac{dp_{r,0}(t)}{dt} = -\lambda(t) p_{r,0}(t) - \lambda(t) p_{r-1,0}(t) + \mu \int_{(0, \infty)}p_{r,1}(x, t)dx, ~ 0 \leq r<k  \qquad \label{eq2} 
\end{eqnarray}
\begin{eqnarray}
&& \frac{\partial p_{0,1}(x, t)}{\partial t} + \frac{\partial p_{0,1}(x, t)}{\partial x} = -(\lambda(t) + \mu)p_{0,1}(x, t)  \label{eq3} \\
&& \frac{\partial p_{n,1}(x, t)}{\partial t} + \frac{\partial p_{n,1}(x, t)}{\partial x} = -(\lambda(t) + \mu)p_{n,1}(x, t) + \lambda(t)p_{n-1,1}(x, t), ~ n \geq 1. \qquad  \label{eq4}
\end{eqnarray}
\end{thm}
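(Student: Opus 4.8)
The plan is to derive the four equations as the forward Kolmogorov (balance) equations of the underlying Markov process, using the elapsed service time $x$ as a supplementary variable so that the busy states become Markovian. I would fix a small increment $\Delta t > 0$, relate the state of the system at time $t+\Delta t$ to its state at time $t$ while keeping only terms up to first order in $\Delta t$, and then divide by $\Delta t$ and let $\Delta t \to 0$. The probabilistic inputs are exactly those fixed in Section 2: arrivals form a Poisson stream of intensity $\lambda(t)$, so an arrival in $[t,t+\Delta t]$ has probability $\lambda(t)\Delta t + o(\Delta t)$, and a service completes at rate $\mu$.

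First, for the idle states $(r,0)$ with $0\le r<k$ the server is not working, so the only relevant events are a single arrival or a service completion elsewhere. I would enumerate the mutually exclusive ways to occupy $(r,0)$ at time $t+\Delta t$: to remain in $(r,0)$ with no arrival, to enter from $(r-1,0)$ through one arrival, or to enter from a busy configuration through a completion. Since a completion that leaves $r<k$ customers waiting turns the server idle in state $(r,0)$, the inflow from completions is $\mu\int_{(0,\infty)}p_{r,1}(x,t)\,dx\cdot\Delta t$. Collecting the contributions, subtracting $p_{r,0}(t)$, dividing by $\Delta t$, and passing to the limit gives (\ref{eq1}) for $r=0$ (where the state $(r-1,0)$ has no predecessor and hence no arrival-inflow term) and (\ref{eq2}) for $0<r<k$.

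Next, for the busy states I track the density $p_{n,1}(x,t)$ in the elapsed service time $x$. The essential observation is that during $[t,t+\Delta t]$ the elapsed service time ages by exactly $\Delta t$, so a configuration with density $p_{n,1}(x,t)$ at time $t$ contributes to the density at the shifted point $(x+\Delta t,\,t+\Delta t)$. Writing the balance
\begin{equation*}
p_{n,1}(x+\Delta t,\,t+\Delta t) = p_{n,1}(x,t)\bigl(1-(\lambda(t)+\mu)\Delta t\bigr) + \lambda(t)\,p_{n-1,1}(x,t)\,\Delta t + o(\Delta t),
\end{equation*}
expanding the left-hand side as $p_{n,1}(x,t) + \bigl(\partial_t p_{n,1}+\partial_x p_{n,1}\bigr)\Delta t + o(\Delta t)$, cancelling $p_{n,1}(x,t)$, and dividing by $\Delta t$ produces the transport operator $\partial_t+\partial_x$ on the left together with the loss term $-(\lambda(t)+\mu)p_{n,1}$ and the arrival-inflow term $\lambda(t)p_{n-1,1}$ on the right. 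For $n=0$ the inflow term is absent, giving (\ref{eq3}); for $n\ge 1$ one obtains (\ref{eq4}).

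I expect the main obstacle to be the careful bookkeeping of the batch-service routing rather than the analytic limit itself. One must verify that service completions at rate $\mu$ enter the busy equations (\ref{eq3})--(\ref{eq4}) only as an \emph{exit} rate, while their \emph{destinations} are accounted for separately: a completion with $n<k$ waiting customers feeds the idle-state inflow terms $\mu\int p_{n,1}\,dx$ in (\ref{eq1})--(\ref{eq2}), whereas a completion with at least $k$ waiting customers immediately restarts service and therefore belongs to the boundary condition at $x=0$, not to the interior equations stated in this theorem. The remaining care is to confirm that every transition is counted exactly once and that all omitted events — two arrivals, or a simultaneous arrival and completion in the same interval — are genuinely $o(\Delta t)$, which completes the derivation.
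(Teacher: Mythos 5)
Your proposal is correct and takes essentially the same approach as the paper: a supplementary-variable, $\Delta t$-increment balance argument in which the idle states give ODEs and the ageing of the elapsed service time produces the transport operator $\partial_t+\partial_x$ for the busy states (the paper reaches your density balance by first writing the cumulative relation $\int_{(0,x+\Delta t)}p_{0,1}(x,t+\Delta t)\,dx = (1-\lambda(t)\Delta t+o(\Delta t))(1-\mu\Delta t+o(\Delta t))\int_{(0,x)}p_{0,1}(x,t)\,dx$, a cosmetic difference only). One remark: your derivation correctly yields the inflow term $+\lambda(t)p_{r-1,0}(t)$ in \eqref{eq2}, whereas the theorem as printed carries a minus sign; this is evidently a typo in the paper, since the operator matrix $\mathbf{L}$ of Section 4 has $+\lambda$ on its subdiagonal, and the paper's own proof (``derived similarly'' to \eqref{eq1}) produces the same plus sign as yours.
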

\begin{proof}
First we derive equation \eqref{eq1}. 
Assume that at time $t$ there is no customer in queue and consider change of state in the interval $(t, t+\Delta t)$. 
The probability that the process goes to state $(0, 0)$ equals the sum of the probability that at time $t$ state is $(0, 0)$ and 
there is no customer arrived in the time interval, the probability that at time $t$ state is $(0, 1)$ and service for a customer 
end in the time interval, and the probability of the others. Since the probability of the others is $o(\Delta t)$, we have
\begin{equation*}
p_{0,0}(t+\Delta t) = p_{0,0}(t)(1-\lambda(t)\Delta t+o(\Delta t)) + (\mu \Delta t+o(\Delta t)) \int_0^{\infty}p_{0,1}(x, t)dx + o(\Delta t).
\end{equation*}

After ordering this equation and dividing by $\Delta t$, taking the limit as $\Delta t \to 0$ yields 
\begin{equation*}
\frac{dp_{0,0}(t)}{dt} = -\lambda(t)p_{0,0}(t) + \mu \int_0^{\infty}p_{0,1}(x,t)dx.
\end{equation*}

Equation \eqref{eq2} is derived similarly.

Next we derive equation \eqref{eq3}. The probability that at time $(t+\Delta t)$ the process goes to state $(0, 1)$ 
and elapsed service time is not greater than $(x+\Delta t)$ equals
\begin{equation*}
\int_{(0, x+\Delta t)}p_{0,1}(x, t+\Delta t)dx.
\end{equation*}
This probability equals the product of the probability that at time $t$ state is $(0, 1)$ and elapsed service time is not 
greater than $x$ and the probability that there is no customer arrived and ended service in the time interval $(t, t+\Delta t)$. 
That is, 
\begin{equation*}
\int_{(0, x+\Delta t)}p_{0,1}(x, t+\Delta t)dx = (1-\lambda(t)\Delta t + o(\Delta t))(1-\mu\Delta t + o(\Delta t)) \int_{(0, x)}p_{0,1}(x, t)dx.
\end{equation*}
From this expression we obtain
\begin{equation*}
p_{0,1}(x+\Delta t, t+\Delta t) - p_{0,1}(x, t) = -(\lambda(t)+\mu)p_{0,1}(x, t)\Delta t + o(\Delta t).
\end{equation*}
Therefore we have
\begin{equation*}
\frac{\partial p_{0,1}(x, t)}{\partial t} + \frac{\partial p_{0,1}(x, t)}{\partial x} = -(\lambda(t)+\mu)p_{0,1}(x, t).
\end{equation*}

Deriving of equation \eqref{eq4} is similar to the derivation of equation \eqref{eq3}. 
\end{proof}

For $x=0$ the following boundary conditions are imposed.
\begin{equation} \label{eq5}
\left\{
\begin{array}{ll}
p_{0,1}(0, t) = \mu \sum_{i=k}^B \int_{(0, \infty)}p_{i,1}(x, t)dx + \lambda(t)p_{k-1,0}(t) \\
p_{n,1}(0, t) = \mu \int_{(0, \infty)}p_{n+B,1}(x, t)dx, ~ n \geq 1
\end{array}
\right.
\end{equation}
As initial condition we assume that
\begin{equation} \label{eq6}
\left\{
\begin{array}{ll}
p_{0,0}(0) = 1 \\
p_{r,0}(0) = 0, ~ 1 \leq r \leq k-1 \\
p_{n,1}(x, 0) = 0, ~ n \geq 0
\end{array}
\right.
\end{equation}

%
%%%%%% 4 The abstract Cauchy problem     %%%%%%
%

\section{The abstract Cauchy problem}
To formulate the problem of state probability equations for the queue $M(t) \vert$ $M[k, B] \vert 1$ as an abstract Cauchy problem, 
we choose the state space as $X = \mathbf{C}^k \times l^1 (L^1[0, \infty))$.

For $\vec{p} = (p_{0,0}, \cdots, p_{k-1,0}, p_{0,1}(\cdot), p_{1,1}(\cdot), \cdots)^T \in X$, the norm of $\vec{p}$ is defined as follows.
\begin{equation*}
\Vert \vec{p} \Vert := \sum_{i=0}^{k-1}\vert p_{i,0} \vert + \sum_{i=0}^{\infty} \Vert p_{i,1}(\cdot)\Vert_{L^1[0, \infty)}.
\end{equation*} 

Define $(A_m, D(A_m))$, the operator on $X$, as follows.
\begin{eqnarray*}
&& A_m := \left(
\begin{array}{cc}
\mathbf{L} & \mathbf{M} \\
\mathbf{0} & \mathbf{K}
\end{array} \right), \\
&& D(A_m) := \mathbf{C}^k \times l^1 (W^{1,1}[0, \infty)),
\end{eqnarray*} 
where
\begin{equation*}
\mathbf{L}:= \left(
\begin{array}{c c c c c c}
-\lambda & 0 & 0 & \cdots & 0 & 0 \\
\lambda & -\lambda & 0 & \cdots & 0 & 0 \\
0 & \lambda & -\lambda & \cdots & 0 & 0 \\
\vdots & \vdots & \vdots & \ddots & \vdots & \vdots \\
0 & 0 & 0 & \cdots & \lambda & -\lambda
\end{array} \right) 
\end{equation*} 
has a format of $k$ order matrix.
\begin{equation*}
\mathbf{M}:= \left(
\begin{array}{c c c c c c c}
\mu\psi & 0 & 0 & \cdots & 0 & 0 & \cdots \\
0 & \mu\psi & 0 & \cdots & 0 & 0 & \cdots \\
\vdots & \vdots & \vdots & \ddots & \vdots & \vdots & \ddots \\
0 & 0 & 0 & \cdots & \mu\psi & 0 & \cdots
\end{array} \right) 
\end{equation*}  
and
\begin{equation*}
\mathbf{K}:= \left(
\begin{array}{c c c c}
\mathbf{D} & 0 & 0 & \cdots \\
\lambda & \mathbf{D} & 0 & \cdots \\
0 & \lambda & \mathbf{D} & \cdots \\
\vdots & \vdots & \vdots & \ddots 
\end{array} \right).
\end{equation*}  
The operator $\psi$ is defined as follows.
\begin{equation*}
\psi : L^1[0, \infty) \to \mathbf{C}, \qquad f \mapsto \psi(f):=\int_{(0, \infty)}f(x)dx
\end{equation*} 
\begin{equation*}
\mathbf{D}f := -\frac{d}{dx}f - (\lambda+\mu)f.
\end{equation*}  
Clearly the operator $(A_m, D(A_m))$ is a closed operator.

As boundary space we choose 
\begin{equation*}
\partial X:=l^1
\end{equation*}  
and define the boundary operators as follows.
\begin{equation*}
L : D(A_m) \to \partial X,
\end{equation*}  
\begin{equation*}
(p_{0,0}, \cdots, p_{k-1,0}, p_{0,1}(\cdot), p_{1,1}(\cdot), \cdots)^T \mapsto (p_{0,1}(0), p_{1,1}(0), \cdots)^T.
\end{equation*}  
  
By operator matrix
\begin{equation*}
\Phi = \left(
\begin{array}{c c c c c c c c c c c c c c}
0 & \cdots & 0 & \lambda & 0 & \cdots & 0 & \mu\psi & \cdots & \mu\psi & 0 & 0 & 0 & \cdots \\
0 & \cdots & 0 & 0 & 0 & \cdots & 0 & 0 & \cdots & 0 & \mu\psi & 0 & 0 & \cdots \\
0 & \cdots & 0 & 0 & 0 & \cdots & 0 & 0 & \cdots & 0 & 0 & \mu\psi & 0 & \cdots \\
0 & \cdots & 0 & 0 & 0 & \cdots & 0 & 0 & \cdots & 0 & 0 & 0 & \mu\psi & \cdots \\
\vdots & \ddots & \vdots & \vdots & \vdots & \ddots & \vdots & \vdots & \ddots & \vdots & \vdots & \vdots & \vdots& \ddots
\end{array} \right)
\end{equation*}  
we define the operator
\begin{equation*}
\Phi : D(A_m) \to \partial X.
\end{equation*}  
In the operator matrix $\Phi$, there are $k-1$ zeroes in front of $\lambda$ at the first row, $k$ zeroes between the $\lambda$ and $\mu\psi$. 
There are $B-k+1 ~ \mu\psi$'s at the row.

Now define $(A, D(A))$, the operator on $X$, as follows.
\begin{eqnarray*}
&& A\vec{p}:=A_m\vec{p}, \\
&& D(A):=\left\{\vec{p} \in D(A_m) \vert L\vec{p} = \Phi \vec{p} \right\}.
\end{eqnarray*}  
 
Using these definitions, \eqref{eq1} $\sim$ \eqref{eq6}, the problem of state probability equations for the queue 
$M(t) \vert$ $M[k, B] \vert 1$, is formulated as the following abstract Cauchy problem 
\begin{equation*}
\left\{
\begin{array}{ll}
\frac{d\vec{p}(t)}{dt} = A\vec{p}(t), ~ t \in [0, \infty) \\
\vec{p}(0) = (1, 0, 0, \cdots)^T \in X
\end{array}
\right..
\end{equation*}

%
%%%%%% 5 The eigenfunction of maximal operator     %%%%%%
%

\section{The eigenfunction of maximal operator}
The following abbreviations are used in the sequel:
\begin{equation*}
\Gamma := \gamma+\lambda+\mu, ~ \Lambda := \gamma+\lambda.
\end{equation*}
\begin{thm}
Let $S = \left\{ \gamma \in C \vert \text{Re}\gamma > -\mu, \gamma \neq -\lambda(t) \right\}$. 
For $\gamma \in S$,
\begin{equation*}
\vec{p} = (p_{0,0}, \cdots, p_{k-1,0}, p_{0,1}(\cdot), p_{1,1}(\cdot), p_{2,1}(\cdot), \cdots)^T \in \textnormal{Ker}(\gamma I-A_m)
\end{equation*}
holds true if and only if there exists $(c_n)_{n \geq 1} \in l^1$ such that
\begin{eqnarray}
p_{0,0} & = & \frac{\mu c_1}{\Gamma(t)\Lambda(t)} \label{eq7} \\
p_{r,0} & = & \frac{1}{\Lambda(t)} \left( \lambda p_{r-1,0} + \mu \sum_{i=1}^{r+1}c_i \frac{\lambda(t)^{r+1-i}}{\Gamma^{r+2-i}} \right), 
~ 1 \leq r \leq k-1 \label{eq8} \\
p_{n,1}(x) & = & e^{-\Gamma(x)} \sum_{i=1}^{n+1}c_i \frac{\lambda(t)^{n+1-i}}{(n+1-i)!} x^{n+1-i}, ~ n \geq 0. \label{eq9}
\end{eqnarray} 
\end{thm}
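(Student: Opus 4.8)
The plan is to read the eigenvalue equation $(\gamma I-A_m)\vec p=0$ off componentwise and exploit the block-triangular shape $A_m=\left(\begin{smallmatrix}\mathbf L & \mathbf M\\ \mathbf 0 & \mathbf K\end{smallmatrix}\right)$: since the lower-left block is $\mathbf 0$, the infinite density part $(p_{n,1})_{n\ge 0}$ decouples and can be solved first, after which the finite part $(p_{r,0})_{0\le r\le k-1}$ is pinned down algebraically. First I would write out the rows coming from $\mathbf K$. Row $0$ gives $-p_{0,1}'-(\lambda+\mu)p_{0,1}=\gamma p_{0,1}$, i.e. $p_{0,1}'=-\Gamma p_{0,1}$, whence $p_{0,1}(x)=c_1e^{-\Gamma x}$; row $n\ge 1$ gives the first-order linear recursion $p_{n,1}'+\Gamma p_{n,1}=\lambda p_{n-1,1}$. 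Solving this chain with the integrating factor $e^{\Gamma x}$ and a short induction produces exactly the polynomial-times-exponential form \eqref{eq9}, the free integration constant at level $n$ being the coefficient $c_{n+1}$ of $x^0$; equivalently $c_{n+1}=p_{n,1}(0)$ (here $t$ is frozen, so $\lambda=\lambda(t)$ and $\Gamma=\Lambda+\mu$ are constants).

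With \eqref{eq9} in hand the finite part is immediate. The rows coming from $\mathbf L$ and $\mathbf M$ read $-\lambda p_{0,0}+\mu\psi(p_{0,1})=\gamma p_{0,0}$ and $\lambda p_{r-1,0}-\lambda p_{r,0}+\mu\psi(p_{r,1})=\gamma p_{r,0}$ for $1\le r\le k-1$. Because $\gamma\ne-\lambda(t)$ we have $\Lambda\ne 0$ and may divide by $\Lambda$; substituting the closed form $\psi(p_{n,1})=\int_{(0,\infty)}p_{n,1}(x)\,dx=\sum_{i=1}^{n+1}c_i\,\lambda^{n+1-i}/\Gamma^{\,n+2-i}$, obtained from $\int_0^\infty e^{-\Gamma x}x^m\,dx=m!/\Gamma^{m+1}$ (valid since $\mathrm{Re}\,\Gamma=\mathrm{Re}\,\gamma+\lambda+\mu>\lambda>0$ on $S$), yields \eqref{eq7} and \eqref{eq8} at once. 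This settles the ``only if'' direction up to the $l^1$ claim, and conversely shows that any $\vec p$ built from a sequence $(c_n)$ through \eqref{eq7}--\eqref{eq9} satisfies $A_m\vec p=\gamma\vec p$ formally.

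The remaining and genuinely delicate point is the summability bookkeeping that matches membership in $X=\mathbf C^k\times l^1(L^1)$ (and in $D(A_m)=\mathbf C^k\times l^1(W^{1,1})$) with the condition $(c_n)\in l^1$, and it is here that the hypothesis $\mathrm{Re}\,\gamma>-\mu$ does its work. For the ``if'' direction I would estimate $\|p_{n,1}\|_{L^1}\le\sum_{i=1}^{n+1}|c_i|\,\lambda^{n+1-i}/(\mathrm{Re}\,\Gamma)^{n+2-i}$, sum over $n$, and re-index by $j=n+1-i\ge 0$ to factor the result as $\sum_i|c_i|$ times the geometric series $\sum_{j\ge 0}(\lambda/\mathrm{Re}\,\Gamma)^j$; this converges precisely because $\lambda<\mathrm{Re}\,\Gamma$ is equivalent to $\mathrm{Re}\,\gamma>-\mu$, giving $\sum_n\|p_{n,1}\|_{L^1}\le(\mathrm{Re}\,\gamma+\mu)^{-1}\|(c_n)\|_{l^1}$, and the matching bound for the derivatives follows from $p_{n,1}'=-\Gamma p_{n,1}+\lambda p_{n-1,1}$, so that $\vec p\in D(A_m)$. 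For the ``only if'' direction I would recover $c_{n+1}=p_{n,1}(0)$ and invoke boundedness of the trace $W^{1,1}[0,\infty)\to\mathbf C$, namely $|f(0)|=\big|\int_0^\infty f'(x)\,dx\big|\le\|f\|_{W^{1,1}}$, to get $\sum_n|c_{n+1}|\le\sum_n\|p_{n,1}\|_{W^{1,1}}<\infty$, i.e. $(c_n)\in l^1$. I expect this convergence analysis, rather than the ODE solving, to be the main obstacle, since it is exactly what couples the algebra of the eigenfunctions to the precise description of the region $S$.
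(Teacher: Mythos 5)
Your proposal is correct and follows essentially the same route as the paper's proof: solve the triangular ODE chain to get \eqref{eq9} with $c_{n+1}=p_{n,1}(0)$, substitute $\psi(p_{n,1})=\sum_{i=1}^{n+1}c_i\lambda^{n+1-i}/\Gamma^{n+2-i}$ to obtain \eqref{eq7}--\eqref{eq8}, prove membership in $D(A_m)$ via the re-indexed geometric-series estimate requiring $\mathrm{Re}\,\gamma>-\mu$, and deduce $(c_n)\in l^1$ from the trace bound $\vert f(0)\vert\le\Vert f\Vert_{W^{1,1}}$. If anything, you are slightly more careful than the paper, which only bounds the $L^1$ norms and omits the derivative estimate $p_{n,1}'=-\Gamma p_{n,1}+\lambda p_{n-1,1}$ needed to conclude $\sum_n\Vert p_{n,1}\Vert_{W^{1,1}}<\infty$.
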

\begin{proof}
We first verify that each $\vec{p}$ given as in \eqref{eq7} $\sim$ \eqref{eq9} is contained in $D(A_m)$. 
Note that for $0 \neq c \in \mathbf{C}$ and $i \in \mathbf{N}$,
\begin{equation*}
\int_0^{\infty}e^{-cx}x^idx = \frac{i!}{c^{i+1}}.
\end{equation*} 
Using this we estimate the norm. 
\begin{eqnarray*}
\Vert p_{n,1}(\cdot) \Vert_{L^1[0, \infty)} & = & \int_0^{\infty} \left\vert e^{-\Gamma x} \sum_{i=1}^{n+1}c_i \frac{\lambda(t)^{n+1-i}}{(n+1-i)!}x^{n+1-i} \right\vert dx \\
& \leq & \sum_{i=1}^{n+1} \vert c_i\vert \frac{\lambda(t)^{n+1-i}}{(n+1-i)!} \frac{(n+1-i)!}{(\text{Re}\Gamma)^{n+2-i}} \\
& = & \sum_{i=0}^{n} \vert c_{n+1-i}\vert \frac{\lambda(t)^i}{(\text{Re}\Gamma)^{i+1}}.
\end{eqnarray*} 
Since $\text{Re}\gamma > -\mu$, the series
\begin{equation*}
\sum_{i=0}^{n} \left( \frac{\lambda(t)}{\text{Re}\Gamma} \right)^i
\end{equation*}
converges absolutely. Therefore 
\begin{eqnarray*}
\sum_{n=0}^{\infty} \Vert p_{n,1}(\cdot) \Vert_{L^1[0, \infty)} & \leq & \sum_{n=0}^{\infty}\sum_{i=0}^n \vert c_{n+1-i}\vert \frac{1}{\text{Re}\Gamma} \left( \frac{\lambda(t)}{\text{Re}\Gamma} \right)^i \\
& = & \frac{1}{\text{Re}\Gamma} \sum_{i=0}^{\infty}\left( \frac{\lambda(t)}{\text{Re}\Gamma} \right)^i \left(\sum_{i=1}^{\infty} \vert c_i \vert \right) \\
& = & \frac{1}{\text{Re}\Gamma} \frac{1}{1-\frac{\lambda(t)}{\text{Re}\Gamma}} \Vert (c_i)_{i\geq 1} \Vert_{l^1} <\infty.
\end{eqnarray*}  
Hence, the norm $\Vert \vec{p} \Vert_{D(A_m)}$ of $\vec{p}$ is finite and $\vec{p} \in D(A_m)$. 
And we can easily verify that each $\vec{p}$ as in \eqref{eq7} $\sim$ \eqref{eq9} satisfies 
\begin{equation*}
(\gamma I - A_m)\vec{p} = 0.
\end{equation*}

Conversely, we assume that $\vec{p} \in \text{Ker}(\gamma I - A_m)$. Then we get a system of differential equations from
\begin{equation*}
(\gamma I - A_m)\vec{p} = 0.
\end{equation*}
Solving this we immediately get \eqref{eq7} $\sim$ \eqref{eq9}. From
\begin{equation*}
\sum_{i=1}^{\infty} \vert c_i \vert = \sum_{i=1}^{\infty} \vert p_{i,1}(0)\vert \leq \sum_{i=1}^{\infty} \Vert p_{i,1}(\cdot)\Vert_{W^{1,1}[0, \infty)} \leq \Vert \vec{p} \Vert_{D(A_m)}<\infty,
\end{equation*}
we obtain that $(c_n)_{n\geq 1} \in l^1$.    
\end{proof}

%
%%%%%% 6 A property of the Dirichlet operator  %%%%%%
%

\section{A property of the Dirichlet operator}
We consider the following operator $A_0$.
\begin{equation*}
A_0 \vec{p} = A_m\vec{p}
\end{equation*} 
\begin{equation*}
D(A_0) = \{\vec{p} \in D(A_m) \vert L\vec{p} = 0\}
\end{equation*} 
Since the boundary operator $L$ is surjective, if $\gamma \in \rho(A_0)$ then 
\begin{equation*}
\left. L \right\vert_{\text{Ker}(\gamma I-A_m)} : \text{Ker}(\gamma I-A_m) \to \partial X
\end{equation*} 
is invertible. Now consider the Dirichlet operator $D_{\gamma}$, its inverse. For $i \in \mathbf{N}$ we define the 
operator $\varepsilon_i : \mathbf{C} \to L^1[0, \infty)$ as follows.
\begin{equation*}
(\varepsilon_i(c))(x) = c\frac{\lambda(t)^i}{i!}x^ie^{-\Gamma x}, ~ c \in \mathbf{C}, x \in [0, \infty).
\end{equation*} 
If $\gamma \in \mathbf{C}, \text{Re}\gamma>-\mu, ~ \gamma \neq -\lambda(t)$ then the following equation holds.
\begin{equation*}
D_{\gamma} = \left(
\begin{array}{c c c c c c c}
d_{1,1} & 0 & 0 & \cdots & 0 & 0 & \cdots \\
d_{2,1} & d_{2,2} & 0 & \cdots & 0 & 0 & \cdots \\
\vdots & \vdots  & \vdots  & \ddots & \vdots  & \vdots  & \ddots \\
d_{k,1} & d_{k,2} & d_{k,3} & \cdots & d_{k,k} & 0 & \cdots \\
\varepsilon_0 & 0 & 0 & \cdots & 0 & 0 & \cdots \\
\varepsilon_1 & \varepsilon_0 & 0 & \cdots & 0 & 0 & \cdots \\
\varepsilon_2 & \varepsilon_1 & \varepsilon_0 & \cdots & 0 & 0 & \cdots \\
\vdots & \vdots  & \vdots  & \ddots & \vdots  & \vdots  & \ddots 
\end{array} \right),
\end{equation*}  
where
\begin{equation*}
d_{i,r} = \frac{\mu\lambda^{i+1-r}}{\Gamma\Lambda^{i+2}}\sum_{j=0}^{i+1-r}\frac{\Lambda^{r+j}}{\Gamma^j}.
\end{equation*} 

To obtain spectrum of the operator $A$, we need expression of $\Phi D_{\gamma}$. We have
\begin{equation*}
\Phi D_{\gamma} = \left(
\begin{array}{c c c c c c c c}
a_{1,1} & a_{1,2} & \cdots & a_{1,1} & 0 & 0 & 0 & \cdots \\
\frac{\mu}{\Gamma}\left(\frac{\lambda}{\Gamma}\right)^{B+1} & \frac{\mu}{\Gamma}\left(\frac{\lambda}{\Gamma}\right)^{B} & \cdots & \frac{\mu}{\Gamma} \frac{\lambda}{\Gamma} & \frac{\mu}{\Gamma} & 0 & 0 & \cdots \\
\frac{\mu}{\Gamma}\left(\frac{\lambda}{\Gamma}\right)^{B+2} & \frac{\mu}{\Gamma}\left(\frac{\lambda}{\Gamma}\right)^{B+1} & \cdots & \frac{\mu}{\Gamma}\left(\frac{\lambda}{\Gamma}\right)^2 & \frac{\mu}{\Gamma} \frac{\lambda}{\Gamma} & \frac{\mu}{\Gamma} & 0 & \cdots \\
\vdots & \vdots & \ddots & \vdots & \vdots & \vdots & \vdots & \ddots
\end{array} \right),
\end{equation*}  
where
\begin{equation*}
a_{1,i} = \frac{\mu}{\Gamma}\left(\frac{\lambda}{\Gamma}\right)^{k+1-i} \sum_{j=0}^{k-i}\left(\frac{\Lambda}{\Gamma}\right)^j + 
\frac{\mu}{\Gamma} \sum_{j=k+1-i}^{B+1-i}\left(\frac{\lambda}{\Gamma}\right)^j, ~ 1 \leq i \leq k,
\end{equation*} 
\begin{equation*}
a_{1,i} = \frac{\mu}{\Gamma} \sum_{j=0}^{B+1-i} \left(\frac{\lambda}{\Gamma}\right)^j, ~ k+1 \leq i \leq B+1.\\
\end{equation*}

%------------------------------------------------------------------------------------%
%

 \textbf{Acknowledgement} I would like to thank anonymous referees for their valuable comments and suggestion.

 \end{document}